\theoremstyle{plain}
\newtheorem{theorem}{Theorem}
\newtheorem{proposition}{Proposition}
\theoremstyle{remark}
\title[Warped product immersions]
{ Lagrangian warped product immersions in $\mathbb{S}^6$ }
\author[Moruz]{Marilena Moruz}
\address{KU Leuven, Department of Mathematics, Celestijnenlaan 200B -- Box 2400, BE-3001 Leuven, Belgium}
\email{marilena.moruz@kuleuven.be}
\thanks{2010 {\it Mathematics Subject Classification.}  53B25, 53C42, 53D12.}
\thanks{The author is a postdoctoral fellow of FWO - Flanders, Belgium.}
\keywords{Lagrangian submanifolds, the nearly K\"ahler $6$-sphere, warped product immersions}
\begin{document}

\begin{abstract}
We study Lagrangian immersions in the nearly K\"ahler $\mathbb{S}^6$ which are  warped product manifolds of a $1$-dimensional base and a surface. Apart from the totally geodesic ones, they are either of constant sectional curvature $\frac{1}{16}$ or they satisfy equality in Chen's inequality, in which case the immersion is given explicitly.
\end{abstract}

\maketitle

\section{Introduction}

The interest in the study of $\mathbb{S}^6$ dates back to 1955 with the results of  A. Fr\"olicher (\cite{pentrus6})  and Fukami and Isihara (\cite{fukami}), where an almost Hermitian structure was presented on $\mathbb{S}^6$ and which led to showing that  $\mathbb{S}^6$ is actually a nearly K\"ahler manifold. More recent results of J.-B. Butruille (\cite{butruille}) and P.-A. Nagy (\cite{nagy}) strengthened the importance of the nearly K\"ahler $\mathbb{S}^6$. It appears as one of the four classes of homogeneous nearly K\"ahler manifolds of dimension $6$, in the context of the structure theorem of  P.-A. Nagy.

The submanifolds of $\mathbb{S}^6$ have been actively investigated for many years afterwards by different authors, with interest in various types of properties (see for example \cite{onS6_6, onS6_1, onS6_2, onS6_5, onS6_3, Luc, onS6_4}).  A submanifold $M$ of $\mathbb{S}^6$ is called almost complex if its tangent space at a point $p$ stays tangent under the action of the almost complex structure $J$. Notice that the almost complex submanifolds must have even dimension. Moreover, Podest\`a and Spiro (\cite{podesta}) proved that the nearly K\"ahler manifolds of dimension $6$ do not admit $4$-dimensional almost complex submanifolds. Otherwise, if $J(T_pM)\subset T_p^{\perp}M$, then $M$ is called totally real. One may notice that the dimension of a totally real submanifold of $\mathbb{S}^6$ can be at most $3$. In the special case when the dimension is precisely $3$, then $M$ is called a Lagrangian submanifold and hence, $J$ interchanges the tangent and the normal spaces of $M$, that is $J(T_pM)=T_p^{\perp}M$.\\
The study of Lagrangian submanifolds of the nearly K\"ahler $\mathbb{S}^6$ is an area of interest for many authors and gathers rich results. A fundamental result is the one of N. Ejiri (\cite{ejiri}) which states that Lagrangian submanifolds in the nearly K\"ahler $\mathbb{S}^6$ are always minimal and orientable. In submanifold theory one investigates mainly the interplay between intrinsic (i.e. properties which depend only on the submanifold and not on the immersion) and extrinsic properties (i.e. properties depending on the immersion). As such there are two different types of questions which can be considered. Firstly, given a Riemannian manifold with certain geometric intrinsic properties one can ask whether it can be immersed as a Lagrangian submanifold and what can be said about the possible immersions. With this viewpoint, the first result was obtained by Ejiri, who showed (\cite{ejiri}) that a Lagrangian submanifold of $\mathbb{S}^6$ with constant sectional curvature is either totally geodesic or has constant sectional curvature $\frac{1}{16}$. However with this type of problem often some additional global conditions are necessary. For example, if the Lagrangian submanifold is compact and has sectional curvature $K>\frac{1}{16}$, then it is totally geodesic (\cite{opozda}). The work of K. Mashimo (\cite{mashimo}) classified the $G_2$-equivariant Lagrangian submanifolds of $\mathbb{S}^6$ into five models.\\
For the second type of questions, one typically assumes some conditions on the second fundamental form (extrinsic properties) and then one tries to determine all immersions satisfying these properties. For Lagrangian submanifolds of dimension $3$, many of those questions can be posed within the framework of the paper of Bryant (\cite{Bryant}). Using the properties of the totally symmetric cubic form $<h(X,Y),JZ>$ besides a generic Lagrangian submanifold, one can introduce $6$ additional special classes admitting some symmetries. For the special case of the nearly K\"ahler $6$ sphere, these classes were studied in \cite{onS6_1}, \cite{deszcz}, \cite{onS6_4}, \cite{onS6_5}. 
Note that all  the examples in the previous results are obtained starting from almost complex surfaces in $\mathbb{S}^6$ (of real dimension $2$) or from holomorphic surfaces in $\mathbb CP^2(4)$.  

In the present paper we want to look at another question of the first type. Namely we want to generalise the result of Ejiri to a larger class of Riemannian manifolds without imposing any global assumptions.  
The class we will investigate are the so called warped product manifolds. As it is well known such manifolds play a very  important role in physics (\cite{beem}, \cite{oneil2}). For example, the Schwarzchild space-time relativistic model, which describes the space around a massive star or black hole, is given as a warped product (\cite{oneil2}, p. 364-367). In fact, the warped product also arises naturally  in string theory, in relation with the classical instability of extra spatial dimensions. In \cite{67Nastasia}, Penrose claims that the spacetime, described in the theory as a direct product of certain manifolds, is physically unstable under small perturbations. From a mathematical point of view, in order to deal with perturbations in this context, one would rather consider a warped product manifold, than just a direct product manifold (\cite{Nastasia}).\\
However, also in submanifold theory many of the important examples admit a warped product structure. This can be seen, for instance, in the recent book of Chen (\cite{chen}) about the role of warped product in submanifold theory. Note that all rotational hypersurfaces in real space forms are warped product manifolds. The same is true for $H$-umbilical Lagrangian submanifolds of complex space forms. 

The present paper inquires on the case when the Lagrangian submanifold $M$ of the $6$-sphere is of warped product type, with a specific form: $M=I\times_fN$, where $I\subset \mathbb{R}$ and $N$ is a surface. We show that these submanifolds have constant sectional curvature or they are described by the following result.
\begin{theorem}
	Let $f:N^2\longrightarrow \mathbb{S}^6(1)$ be a minimal (non-totally geodesic) totally real immersion in $\mathbb{S}^6$ whose ellipse of curvature is a circle. Then $N^2$ is linearly full in a totally geodesic $\mathbb{S}^5$. Let $\vec{n}$ be a unit vector perpendicular to this $\mathbb{S}^5$. Then 
	\begin{align*}
	x:\left( -\frac{\pi}{2}, \frac{\pi}{2} \right)\times N^2\longrightarrow \mathbb S^6(1)\\
	(t,p)\mapsto \sin(t) \vec n+\cos(t)\tilde f(p)
	\end{align*}
	is  locally a totally real immersion which is warped product. Conversely, every locally warped product Lagrangian immersion, which is not of constant sectional curvature, can be obtained in this way.
	
\end{theorem}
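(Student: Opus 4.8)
The plan is to treat the two implications separately, grounding both in an analysis of the second fundamental form $h$, the totally symmetric cubic form $C(X,Y,Z)=\langle h(X,Y),JZ\rangle$, and the warped product connection formulas. For the construction (forward) direction I would first record that, writing $\mathbb{S}^6\subset\mathbb{R}^7$ and letting $\tilde f$ denote the $\mathbb{R}^7$-valued position map of $f$, the map $x(t,p)=\sin t\,\vec n+\cos t\,\tilde f(p)$ has $|x|=1$ because $\vec n\perp\mathbb{S}^5\ni\tilde f(p)$ and both are unit. A direct computation of $x_t$ and the fibre derivatives then gives $\langle x_t,x_t\rangle=1$, $\langle x_t,x_{p}\rangle=0$ and $\langle x_{p},x_{q}\rangle=\cos^2 t\,\langle\tilde f_{p},\tilde f_{q}\rangle$, so the induced metric is exactly the warped product $dt^2+\cos^2 t\,g_N$. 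Thus the warped structure, with warping function $\cos t$, is automatic, and the real content is the totally real property.

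The step that $N^2$ lies linearly full in a totally geodesic $\mathbb{S}^5$ I would derive from the hypothesis that the ellipse of curvature is a circle: for a minimal surface this is an isotropy (superconformal) condition that confines the osculating spaces of $N$ to a single $6$-dimensional subspace $\mathbb{R}^6\subset\mathbb{R}^7$, so that one ambient direction is never attained; this missing direction is the constant unit vector $\vec n$ perpendicular to the resulting $\mathbb{S}^5$. To verify that $x$ is totally real I would compute $Jx_t$ and $Jx_{p}$ from the octonionic/nearly K\"ahler description of $J$ and check they are orthogonal to $x_t$ and to every $x_{q}$; the circular ellipse condition on $f$ is precisely what makes the mixed inner products vanish, so that $J$ interchanges $T_{(t,p)}M$ with its normal space. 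The subtlety here is confirming that the totally real property persists for all $t\in(-\tfrac{\pi}{2},\tfrac{\pi}{2})$ and not merely at $t=0$.

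For the converse I would start from an abstract warped product Lagrangian immersion $M=I\times_f N$ and fix an adapted orthonormal frame $\{E=\partial_t,X_1,X_2\}$ with $X_1,X_2$ tangent to the fibres, writing $\eta=f'/f$. The warped product connection gives $\nabla_E E=0$, $\nabla_{X_i}E=\eta X_i$ and $\nabla_{X_i}X_j=\nabla^{N}_{X_i}X_j-\eta\,\delta_{ij}E$, which I would feed into the Codazzi equation together with the nearly K\"ahler identities: total symmetry of $C$, minimality $\mathrm{tr}\,h=0$, and the skew tensor $G(X,Y)=(\tilde\nabla_X J)Y$, which is normal-valued for a Lagrangian submanifold. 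Combined with the Gauss equation, whose left-hand side is the explicit warped product curvature (in particular $K(E,X_i)=-f''/f$), this produces a system of ODEs in $t$ for the frame components $c_{ijk}$ of $C$ together with an equation for $\eta$.

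I expect the crux to be extracting a clean dichotomy from this system. The plan is to show that, unless $M$ has constant sectional curvature $\tfrac{1}{16}$ (the excluded Ejiri case, which also absorbs the totally geodesic one), the structure forces $h(E,\cdot)=0$, equivalently that the base curves $t\mapsto x(t,p)$ are geodesics of $\mathbb{S}^6$; then $K(E,X_i)=1$, the warping equation becomes $f''+f=0$, and after normalising $t$ one obtains $f=\cos t$ on $(-\tfrac{\pi}{2},\tfrac{\pi}{2})$, while $C$ simultaneously reduces to the normal form realising equality in Chen's inequality. The vanishing of $h(E,\cdot)$ makes each base curve a unit-speed great circle, so in $\mathbb{R}^7$ one has $x_{tt}=-x$ along it, giving $x(t,p)=\cos t\,x(0,p)+\sin t\,\vec n(p)$ with $\vec n(p)=x_t(0,p)$; the totally real and warped conditions then force $\vec n$ to be a single constant unit vector and $p\mapsto x(0,p)$ to be a minimal totally real surface, with circular ellipse of curvature, in the $\mathbb{S}^5$ orthogonal to $\vec n$. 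Identifying this surface with $f$ closes the loop. The principal obstacle, I anticipate, is precisely the bookkeeping that reduces the coupled Gauss--Codazzi--nearly K\"ahler system to the two assertions $h(E,\cdot)=0$ and the Chen-equality normal form for $C$, and in particular verifying that the non-constant-curvature hypothesis is exactly what eliminates the alternative branch.
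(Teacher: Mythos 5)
Your forward direction and the metric computation are exactly the paper's: the paper verifies the warped product structure by the same calculation of $x_t,x_u,x_v$ (its final Proposition), and the containment of $N^2$ in a totally geodesic $\mathbb{S}^5$ is the cited result of Bolton--Vrancken--Woodward; the totally real property of $x$ itself is outsourced to the classification theorem of Chen--Dillen--Verstraelen--Vrancken \cite{CDVV}, which your octonionic verification would replace (legitimately, if carried out). Your reconstruction step in the converse is also sound: once $h(\partial_t,\cdot)=0$, the base curves are great circles, $x(t,p)=\cos t\,\phi(p)+\sin t\,\vec n(p)$, and the warped metric with $f=\cos t$ alone forces $\vec n_u=0$, as you indicate.

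The genuine gap is at the step you yourself flag as the crux: extracting the dichotomy ``constant sectional curvature or $h(\partial_t,\cdot)=0$'' from the structure equations. You propose to feed the warped connection into Gauss--Codazzi and solve an ODE system, but the Gauss equation is \emph{quadratic} in the components of $h$, and no mechanism is given for closing that system; this is precisely the difficulty the paper circumvents with the Tsinghua principle, i.e.\ differentiating the Codazzi equation, summing cyclically, and using the Ricci identity together with the first Bianchi identity to obtain \eqref{eq1}, which is \emph{linear} in $h$. Evaluated on the warped frame, this yields first that $\partial_t$ is an eigenvector of $A_{J\partial_t}$ (unless $K_N-f'^2+ff''=0$, the constant curvature case handled by Ejiri), and then $\mu_2=\mu_3=\mu$, after which the normal form of \cite{Luc} and minimality give \eqref{Luc2}. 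Moreover, you misattribute what eliminates the remaining branch: the case $\mu\neq 0$ is \emph{not} excluded by the non-constant-curvature hypothesis, but by the warped product structure itself --- the fiber distribution $\mathcal{D}^\perp=\{E_2,E_3\}$ must be integrable, while the bracket computation in the proof of Theorem 2 of \cite{deszcz} shows $[E_2,E_3]$ always has a nonvanishing $E_1$-component when $\mu\neq0$. Finally, your closing assertion that the slice $p\mapsto x(0,p)$ is a minimal totally real surface with circular ellipse of curvature is stated without proof; in the paper this (together with the local uniqueness of the construction) is the content of the cited theorem of \cite{CDVV}, applied after checking via Lemma 6.1 of \cite{CDVV} that \eqref{star} gives Chen's equality $\delta_M=2$, that $\dim\mathcal{D}$ is constant, and that $\mathcal{D}^\perp$ is integrable. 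If you wish to avoid that citation, the minimality and ellipse conditions for the slice must be derived from the Lagrangian condition on $x$, which is a substantial computation your outline does not contain.
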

The method we use is similar to the one developed in \cite{tsinghuapaper} for studying affine hypersurfaces with constant sectional curvature. In that paper, \cite{tsinghuapaper}, the method used, even though elementary, is called the Tsinghua principle. The same technique was also used in \cite{dioosvranckenwang} for studying Lagrangian submanifolds of the nearly K\"ahler $\mathbb{S}^3 \times \mathbb{S}^3$ with constant sectional curvature.

\section{Preliminaries}
\textbf{The nearly K\"ahler structure of $\mathbb{S}^6$.} 
The following part gives a short description of the nearly K\"ahler structure of $\mathbb{S}^6$ (see \cite{DVV}). Further references to this subject may be found in various papers, see for example  \cite{ejiri} or \cite{fukami}.\\
Let $e_0,e_1,\ldots,e_7$ denote the standard basis of  $\mathbb{R}^8$. A point 
$\alpha\in \mathbb{R}^8 $ can be written in a unique way as $\alpha = A e_0 +x$, for some $A\in\mathbb{R}$ and  $x\in span\{e_1,\ldots, e_7\}$. Therefore, $\alpha$ can be viewed as a Cayley number. We call $\alpha$ purely imaginary if  $A=0$. For $x,y$ two purely imaginary Cayley numbers we define the multiplication $ ``\cdot"$ given by: $x\cdot y= \langle x,y \rangle e_0+x\times y$, where $``\times"$ is defined according to the following multiplication table.
\begin{center}
	\begin{tabular}{ c| ccccccc }
		\hline
		$e_j\times e_k$ & 1 & 2 & 3 & 4 & 5 & 6 & 7 \\ \hline
		1 & $0$ & $e_3$ & $-e_2$ & $e_5$ & $-e_4$ & $e_7$ &$-e_6$  \\
		2 & $-e_3$ & $0$ & $e_1$ & $e_6$ & $-e_7 $ & $-e_4 $ & $e_5$  \\
		3 & $e_2 $ & $-e_1$ & $0$ & $-e_7$ & $-e_6$ & $e_5$ & $e_4$  \\
		4 & $-e_5$ & $-e_6$ & $e_7 $ & $0$ & $e_1 $ &$e_2$& $-e_3$  \\
		5 & $e_4$ & $e_7$ & $e_6$ & $-e_1$ & $0$ &$-e_3$ & $-e_2$  \\
		6 & $-e_7$ & $e_4$ & $-e_5$ &$-e_2$ &$e_3$ & $0$ & $e_1$  \\
		7 & $e_6$ & $-e_5$ & $-e_4$ & $e_3$ & $e_2$ & $-e_1$ & $0$  \\
	\end{tabular}
\end{center}
Of course, the multiplication	$``\cdot"$  is neither commutative, nor associative. Let $\mathcal{C}_+$ denote the set of all purely imaginary Cayley numbers. We consider the set  
$$\mathbb{S}^6(1)=\{x\in \mathcal{C}_+ | \langle x,x \rangle =1\},$$ which defines the unit sphere of dimension $6$ centered at the origin. The tangent space $T_p\mathbb{S}^6 $ of $\mathbb{S}^6$  is given by  the subspace of $\mathcal{C_+}$ which is orthogonal to $p\in \mathbb{S}^6$. We define next an almost complex structure $J$ on $T_p\mathbb{S}^6 $ ($J$ is an  endomorphism such that $J^2=-Id$ ) as
$$ 	J_pU=p\times U, $$
where $p\in \mathbb{S}^6(1)$, $U\in T_p\mathbb{S}^6.$\\
Notice that the compact Lie group $G_2$ is the group of automorphisms of $\mathcal{C}$. It acts transitively on $\mathbb{S}^6(1)$ and preserves both $J$ and the standard metric on $\mathbb{S}^6(1)$.\\
Next, one defines $G(X,Y):=(\tilde{\nabla}_XJ)Y,$ for $X,Y\in \mathfrak{X}(\mathbb{S}^6)$ and $\tilde \nabla$ Levi-Civita connection on $\mathbb{S}^6$. One may check that $ G(X,X)=0$ holds (i.e. $ (\tilde \nabla_X J)X=0$) and therefore we obtain that the  almost complex structure $J$ defines a nearly K\"ahler structure on $(\mathbb{S}^6, \langle \cdot, \cdot\rangle, J)$.\\

\textbf{Lagrangian submanifolds of $\mathbb{S}^6$.} Let $M$ be a Lagrangian submanifold of $\mathbb{S}^6$ and $X,Y,Z,W$ tangent and $\xi$ normal vector fields to $M$. The Gauss and Weingarten formulas write out as
\begin{itemize}
	\item[(G)] $ \tilde{\nabla}_XY=\nabla_XY + h(X,Y) $, 
	\item[(W)] $\tilde{\nabla}_X\xi=-A_{\xi}X+\nabla^{\perp}\xi,$ 
\end{itemize} 
where $h$ is the second fundamental form of $M$  and $A_{\xi}$ is the shape operator of $\xi$.  Notice that they are related by $\langle h(X,Y),\xi \rangle=\langle A_{\xi}X,Y \rangle$ and we have as well that 
\begin{equation*}
A_{JY}X=-Jh(X,Y), \quad  \nabla^{\perp}_XJY=G(X,Y)+J\nabla_XY.
\end{equation*}
The equations of Gauss, Codazzi and Ricci are 
\begin{align}
R(X,Y)Z&=\langle Y,Z \rangle X-\langle X,Z \rangle Y  + [A_{JX},A_{JY}]Z,\label{gauss}\\
(\nabla h)(X,Y,Z)&=(\nabla h)(Y,X,Z)\label{cdz2},\\
R^{\perp}(X,Y)JZ &=J [A_{JX},A_{JY}]Z.
\end{align} 
We recall that the normal curvature tensor $R^{\perp}(\cdot,\cdot)\cdot$ is defined as
\begin{equation*}
R^{\perp}(X,Y)\xi=\nabla^{\perp}_X\nabla^{\perp}_Y\xi-\nabla^{\perp}_Y\nabla^{\perp}_X\xi-\nabla^{\perp}_{[X,Y]}\xi
\end{equation*}
and the covariant derivative of $h$ is given by
\begin{equation*}
(\nabla h)(X,Y,Z)=\nabla^{\perp}_X h(Y,Z)-h(\nabla_XY,Z)-h(Y,\nabla_XZ).
\end{equation*}
The following Ricci identity holds
\begin{align}\label{ricci}
\begin{array}{l}
(\nabla^2 h)(X,Y,Z,W)-(\nabla^2 h)(Y,X,Z,W)=\\
R^{\perp}(X,Y)h(Z,W)-h(R(X,Y)Z,W)-h(Z,R(X,Y)W),
\end{array}
\end{align}
where the second covariant derivative of $h$ is defined as
\begin{align}\label{defsecder}
(\nabla^2 h)( X,Y,Z,W)=&\nabla^{\perp}_X(( \nabla h)(Y,Z,W))-(\nabla h)(\nabla _XY,Z,W)\\
& -(\nabla h)(Y,\nabla _XZ,W)-(\nabla h)(Y,Z,\nabla _XW ).\nonumber
\end{align}

\textbf{Warped product manifolds (\cite{chen}).} Let $B$ and $F$ be two Riemannian manifolds of positive dimensions, endowed with Riemannian  metrics $g_B$ and $g_F$, respectively. Let $f$ be a positive smooth function on $B$. On the product manifold $B\times F$, on which  we have the natural projections 
$$ \pi: B\times F\longrightarrow B \text{ and } \eta: B\times F\longrightarrow F,$$
we consider the Riemannian structure such that 
\begin{equation}\label{metric}
\langle X,Y\rangle=\langle \pi_{\star}(X),\pi_{\star}(Y)\rangle + f^2({\tiny\pi(x)}) g( \eta_{\star}(X), \eta_{\star}(Y) ), 
\end{equation} for any tangent vectors $X,Y\in TM$. 
This way $B\times F$ becomes a warped product manifold $M=B\times_f F$, for which the metric $g$ satisfies $g=g_B+f^2g_F$.\\
The function $f$ is called the warping function of the warped product, $B$ is called the base and $F$, the fiber. If $f$ is constant, then the warped product $B\times_f F$ is called trivial. In this case $B\times_f F$ is the Riemannian product $B\times F_f$, where $F_f$ is the Riemannian manifold equipped with the metric $f^2 g_F$.\\
The leaves $B\times \{q\}=\eta^{-1}(q)$ and the fibers $\{p\}\times F=\pi^{-1}(p)$ are Riemannian submanifolds of $M$. Vectors tangent to leaves are called horizontal and those tangent to fibers are called vertical. Let $\mathcal{H}$ denote the orthogonal projection  of $T_{(p,q)}M$ onto its horizontal subspace  $T_{(p,q)}(B\times\{q\})$ and let $\mathcal{V}$ denote the projection  onto the vertical subspace  $T_{(p,q)}(\{p\}\times F)$. If $u\in T_pB$, $p\in B$ and $q\in F$, then the lift $\bar{u}$ of $u$ to $(p,q)$ is the unique vector in  $T_{(p,q)}M$ such that $\pi_{\star}(\bar{u})=u$. For a vector field $X\in \mathfrak{X}(B)$, the lift of $X$ to $M$ is the vector field $\bar{X}$ whose value at $(p,q)$ is the lift of $X_p$ to $(p,q)$.  The set of all horizontal lifts is denoted by $\mathcal{L}(B)$. Similarly, we denote by $\mathcal{L}(F)$ the set of all vertical lifts.

One may express the curvature tensor of the warped product manifold in terms of the warping function and the curvature tensors of the two components $B$ and $F$.  For a warped product manifold $M=B\times_fF$, we define the lift $\tilde T$ of a covariant tensor $T$ on $B$ to $M$ as the pullback $\pi^*(T)$ via the projection $\pi:M\longrightarrow B$. We will denote by $^{B}\!R$ and $^{F}\!R$  the lifts to $M$ of the curvature tensors of $B$ and $F$, respectively.
\begin{proposition}[\cite{chen}]\label{prop}
	Let  $M=B\times_fF$ be a warped product of two (pseudo-)Riemannian manifolds. If $X,Y,Z\in \mathcal{L}(B)$  and $U,V,W\in \mathcal{L}(F)$, then we have
	\begin{itemize}
		\item[(1)] $R(X,Y)Z\in \mathcal{L}(B)$ is the lift of ${}^{B}\!R(X,Y)Z$ on $B$;
		\item[(2)] $R(X,V)Y=\frac{H^f(X,Y)}{f}V$;
		\item[(3)] $R(X,Y)V= R(V,W)X=0$;
		\item[(4)] $R(X,V)W=-\frac{\langle V,W\rangle}{f}\nabla_X(\nabla f)$;
		\item[(5)] $R(V,W)U= {}^F\!R(V,W)U+\frac{\langle \nabla f, \nabla f \rangle}{f^2}\{\langle V,U \rangle W-\langle W,U \rangle V\}$,
	\end{itemize}
	where $R$ is the curvature tensor of $M$ and $H^f$ is the Hessian of $f$. 
	
\end{proposition}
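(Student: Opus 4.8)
The plan is to derive this from the Levi--Civita connection $\nabla$ of the warped product metric \eqref{metric} and then substitute into the definition $R(X,Y)Z=\nabla_X\nabla_Y Z-\nabla_Y\nabla_X Z-\nabla_{[X,Y]}Z$ for each of the five combinations of horizontal and vertical arguments. The essential preliminary is a set of connection formulas, which I would obtain from the Koszul formula together with two structural facts about lifts: brackets internal to a single factor are lifts of the corresponding brackets on $B$ or $F$, while a horizontal and a vertical lift always commute, $[X,V]=0$ for $X\in\mathcal L(B)$ and $V\in\mathcal L(F)$. With these, the Koszul formula yields
\begin{align*}
\nabla_X Y &= \widetilde{{}^{B}\!\nabla_X Y}, \qquad \nabla_X V=\nabla_V X=\frac{X(f)}{f}\,V,\\
\nabla_V W &= \widetilde{{}^{F}\!\nabla_V W}-\frac{\langle V,W\rangle}{f}\,\nabla f,
\end{align*}
for $X,Y\in\mathcal L(B)$ and $V,W\in\mathcal L(F)$, where $\nabla f$ denotes the horizontal gradient of $f$. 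The coefficients $\tfrac{X(f)}{f}$ and $\tfrac{\langle V,W\rangle}{f}$ are exactly what the factor $f^2$ in \eqref{metric} produces when one differentiates the fiber part of the metric.

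Given these formulas, each item is a direct computation. For item (2), since $[X,V]=0$ one has $R(X,V)Y=\nabla_X\nabla_V Y-\nabla_V\nabla_X Y$; inserting $\nabla_V Y=\tfrac{Y(f)}{f}V$ and $\nabla_X Y=\widetilde{{}^{B}\!\nabla_X Y}$ and using $X\!\left(\tfrac{Y(f)}{f}\right)=\tfrac{X(Y(f))}{f}-\tfrac{X(f)Y(f)}{f^2}$, the purely vertical terms telescope and leave $\tfrac{1}{f}\big(X(Y(f))-({}^{B}\!\nabla_X Y)(f)\big)V=\tfrac{H^f(X,Y)}{f}V$. Item (1) follows because the extra vertical contributions cancel, reducing $R$ to the intrinsic curvature of $B$; item (3) holds because $[V,W]\in\mathcal L(F)$ and all remaining terms cancel. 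Item (4) runs parallel to (2), except that the differentiation now acts on the horizontal term $-\tfrac{\langle V,W\rangle}{f}\nabla f$ of $\nabla_V W$, producing $-\tfrac{\langle V,W\rangle}{f}\nabla_X(\nabla f)$. For item (5) the mixed terms combine the intrinsic fiber curvature ${}^{F}\!R(V,W)U$ with an extra contribution $\tfrac{\langle\nabla f,\nabla f\rangle}{f^2}\{\langle V,U\rangle W-\langle W,U\rangle V\}$ that arises when the horizontal gradient term of $\nabla_V W$ is acted on by a further vertical connection.

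The real work is establishing the connection formulas in the first paragraph, and in particular fixing the vertical--vertical term $\nabla_V W$; once these are correct, the five curvature identities are bookkeeping. The step I expect to demand the most care is keeping the horizontal and vertical components separated at each stage of the Koszul computation, since it is precisely the horizontal projection of $\nabla_V W$ that feeds the Hessian and gradient terms in (2), (4) and (5).
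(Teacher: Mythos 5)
Your proposal is correct and is essentially the proof of record: the paper states this proposition without proof, citing \cite{chen}, and the argument given there (following O'Neill's \emph{Semi-Riemannian Geometry}) is exactly your route — first the connection formulas $\nabla_XY=\widetilde{{}^{B}\nabla_XY}$, $\nabla_XV=\nabla_VX=\frac{X(f)}{f}V$, $\nabla_VW=\widetilde{{}^{F}\nabla_VW}-\frac{\langle V,W\rangle}{f}\nabla f$ via the Koszul formula and $[X,V]=0$, then direct substitution into $R(X,Y)Z=\nabla_X\nabla_YZ-\nabla_Y\nabla_XZ-\nabla_{[X,Y]}Z$. Your sign convention matches the paper's, the telescoping in item (2) and the cancellations you identify in items (3)--(5) are exactly right (in item (1) there are in fact no vertical contributions to cancel, since the leaves are totally geodesic), so the proof goes through as outlined.
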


\textbf{Chen's ideal submaniolds.}
In 1993 B.-Y. Chen introduced (\cite{pinching}) the Riemannian invariant $\delta_M: M\rightarrow \mathbb R$ defined on a Riemannian manifold $M$ of dimension $n$ as
$$
\delta_M(p)=\tau(p)-(inf\ K)(p),
$$
where
$$
(inf\ K)(p)=inf\{K(\pi)|\, \pi \text{ is a } 2\text{-dimensional subspace of } T_pM\},
$$
$K(\pi)$ is the sectional curvature of $\pi$ and $\tau(p)=\sum\limits_{i<j}K(e_i\wedge e_j)$ is the scalar curvature  with respect to an orthonormal basis $\{e_1,\ldots,e_n\}$ of the tangent space of $M$ at $p$.  Furthermore, using this invariant, the author proved the following sharp inequality for submanifolds $M$ of dimension $n$ of real space forms $\tilde M(\tilde c)$:
\begin{equation}\label{delta}
\delta_M\leq \frac{n^2(n-2)}{2(n-1)} \|H\|^2+\frac{1}{2}(n+1)(n-2)\tilde c,
\end{equation}
where $H$ is the mean curvature vector field of $M$ in $\tilde M$ and $\|H\|$ is the mean curvature function. B.-Y. Chen further investigated such submanifolds of dimension $3$ for which the above inequality becomes equality. In the case when the real space form is $\mathbb{S}^6$  and $M$ is a Lagrangian submanifold, the inequality in \eqref{delta} becomes $\delta_{M}\leq 2$, since Lagrangian submanifolds in $\mathbb{S}^6$ are minimal and of dimension $3$.

Deszcz, Dillen, Verstraelen and Vrancken (\cite{deszcz}) proved that the Lagrangian submanifolds which realize equality with $\delta_{M}=2$ are quasi-Einstein. In general, an $n$ dimensional manifold is called quasi-Einstein if its Ricci tensor has an eigenvalue of multiplicity at least $n-1$.

The study of Lagrangian submanifolds with $\delta_M=2$ was started in \cite{CDVV} and \cite{chen36}. 
The complete classification of Lagrangian submanifolds in the nearly K\"ahler $6$-sphere satisfying $\delta_M=2$ was established by Dillen and Vrancken (\cite{onS6_1}). These also correspond to the Lagrangian submanifolds admitting a $\mathbb{S}_3$ symmetry in the framework of Bryant.   \\
The previous inequality is relevant in the general context of submanifolds theory, when trying to  find the relationship between the main intrisic invariants and the main extrinsic invariants of a submanifold.\\

\section{Results}
We study  minimal Lagrangian warped product submanifolds isometrically immersed in $\mathbb{S}^6$, of the form
$$ M=I \times_f N\mapsto \mathbb{S}^{6}, $$
where  $I \subset \mathbb{R}$, $f$ is a function defined on $I$ and  $(N, g)$ is a surface. Note that we will sometimes use the notation $N^2$ to indicate that $N$ is $2$-dimensional.

Let $E_1=\frac{\partial}{\partial t}$ be the unit vector tangent to $I$ and $\tilde U,\tilde V$ tangent vectors to $N$. Then from \eqref{metric} it follows that:
$$
\langle E_1, E_1\rangle=1,\quad
\langle  E_1,\tilde U \rangle=0,\quad
\langle \tilde U,\tilde V\rangle=f^2 g(\tilde U,\tilde V).
$$
Moreover, from  Proposition \ref{prop} we have: 
\begin{align*}
R(E_1,\tilde V)E_1&=\frac{f''}{f}\tilde V,\\
R(\tilde W,\tilde V)E_1&=0,\\
R(E_1,\tilde V)\tilde W&=-\langle \tilde V,\tilde W \rangle \frac{f''}{f}E_1,\\
R(\tilde V,\tilde W)\tilde U&={}^{N}\!R(\tilde V,\tilde W)\tilde U+\frac{f'^2}{f^2}\{\langle \tilde V,\tilde W \rangle \tilde W-\langle \tilde W,\tilde U\rangle \tilde V\},
\end{align*}
where ${}^{N}\!R$ is the curvature tensor on $N$ and $\tilde W$ is tangent to $N$.

In order to study such Lagrangian submanifolds $M$ in $\mathbb{S}^6$, we will make use of the technique introduced  in \cite{tsinghuapaper}, as the Tsinghua principle. It is described in the computations below and one should notice that it leads to obtaining an additional formula (see equation \eqref{eq1}) involving the second fundamental form, which will eventually provide new information about the warped product submanifold. The advantage of this technique, compared with using directly the Gauss equation, is that the resulting equations are linear in the components of the second fundamental form (instead of quadratic).
It is worth mentioning that depending on the form of the Codazzi equation of the submanifold (notice that there is no normal part of the curvature tensor of the ambiant manifold in \eqref{cdz2}), the computations of the Tsinghua principle can sometimes turn out to be more lengthy or even more complicated. However, it is not the case here (notice the zero right-hand side term in \eqref{eq2}). A few other applications of this technique can be found in \cite{BartDioosPHDthesis, dioosvranckenwang}.\\ 
We proceed by taking the derivative with respect to a tangent vector $W\in T_{(p,q)}M$ in the Codazzi equation \eqref{cdz2} and we see that
\begin{equation}\label{eq2}
(\nabla^2h)(W,X,Y,Z)-(\nabla^2h)(W,Y,X,Z)=0.
\end{equation} In this relation we permute cyclically $W,X,Y$ and use again the Ricci identity in \eqref{ricci} together with the first Bianchi identity to obtain
\begin{align}\label{eq}
0=& R^{\perp}(W,X)h(Y,Z)-h(Y,R(W,X)Z)+\nonumber\\
&  R^{\perp}(X,Y)h(W,Z)-h(W,R(X,Y)Z)+\\
&  R^{\perp}(Y,W)h(X,Z)-h(X,R(Y,W)Z).\nonumber
\end{align}
From the equations of Gauss and Ricci we have that $R^{\perp}(X,Y)JZ = JR(X,Y)Z +\langle X,Z\rangle JY -\langle Y,Z \rangle JX$.
Therefore, equation \eqref{eq} becomes:
\begin{align}\label{eq1}
0=&JR(W,X)Jh(Y,Z)+ h(Y, R(W,X)Z)\nonumber\\
&JR(X,Y)Jh(W,Z)+ h(W,R(X,Y)Z)\\
&JR(Y,W)Jh(X,Z)+h(X, R(Y,W)Z).\nonumber
\end{align}
Next, we apply $J$ and take the inner product in \eqref{eq} with $ V\in T_{(p,q)}M $. We take $\tilde U,\tilde V$ to form an orthonormal basis on $T_qN$ and we choose  $Y=V=E_1$ and $X=Z=\tilde U$, $W=\tilde V$.
As $^{N}R(\tilde V, \tilde U)\tilde U=\frac{K_{N}(q)}{f^2}(\langle \tilde U,\tilde U \rangle \tilde V-\langle \tilde V, \tilde U \rangle \tilde U)$, where  $K_{N}(q)$ denotes the Gaussian curvature of $N$ at $q$,  we get that 
$$
(K_{N}(q)-f'^2+ff'')\langle A_{JE_1}E_1,\tilde V\rangle=0.
$$
Similarly, we obtain
$$
(K_{N}(q)-f'^2+ff'')\langle A_{JE_1}E_1,\tilde U\rangle=0.
$$
Hence, there are two cases:
\begin{itemize}
	\item[ (i.)] $K_{N}(q)-f'^2+ff''=0$;
	\item[(ii.)] $K_{N}(q)-f'^2+ff''\neq 0$.
\end{itemize}
Let us treat first the case when  $K_{N}(q)-f'^2+ff''=0$. By straightforward computations we obtain that the sectional curvature of $M$ is given by
$$
K_{M}(\pi)=-\frac{f''}{f}, \text{ for all }2\text{-planes }  \pi\in T_pM. 
$$
As $dim M=3$, it follows that $M$ has constant sectional curvature. By the results in \cite{ejiri} we know that  $M$ is either totally geodesic (i.e. it has sectional curvature equal to $1$) or  it has constant sectional curvature $\frac{1}{16}$ (in which case, $M$ is congruent to one of the homogeneous examples of Mashimo, see \cite{mashimo}).

In the case when $K_{N}(q)-f'^2+ff''\neq 0$ holds, it follows that $E_1$ is an eigenvector of $A_{JE_1}$. We denote by $E_2,E_3$ the other eigenvectors of $A_{JE_1}$, by $\mu_1, \mu_2,\mu_3$ the corresponding eigenvalues and obtain that 
$$Jh(E_1,E_k)=\mu_k E_k,\, k=1,2,3.$$
Let us now choose $W=E_1$, \, $X=Z=E_2$, \, $Y=V=E_3$ in \eqref{eq}. It follows that
\begin{align*}
0=&(\frac{f'^2}{f^2}-\frac{f''}{f})\langle E_2, E_2\rangle \langle E_3, Jh(E_1,E_3)\rangle-  (\frac{f'^2}{f^2}-\frac{f''}{f})\langle E_3, E_3\rangle \langle E_2, Jh(E_1,E_2)\rangle\\
&+\langle {}^{N}\!R(E_2,E_3)E_2, Jh(E_1,E_3)\rangle +
\langle {}^{N}\!R(E_2,E_3)E_3, Jh(E_1,E_2)\rangle, 
\end{align*}
which implies 
\begin{equation*}
0=(\mu_3-\mu_2)(f'-f''f-K_{N}(q))
\end{equation*}
and, hence, $\mu_3-\mu_2=0$. Let $\mu:=\mu_2=\mu_3$. By straightforward computations, we obtain that there exist local functions $a,b,c,d$ such that 
\begin{equation}\label{Luc}
\begin{matrix*}[l]
Jh(E_1,E_1)=\mu_1 E_1,& Jh(E_2,E_2)=\mu E_1+a E_2+d E_3,\\
Jh(E_1,E_2)=\mu E_2,  & Jh(E_2,E_3)=d E_2 +b E_3,\\
Jh(E_1,E_3)=\mu E_3,  & Jh(E_3,E_3)=\mu E_1+bE_2+c E_3.
\end{matrix*}
\end{equation}
From \cite{Luc} we know that we may choose $E_2, E_3$ such that \eqref{Luc} holds for some functions $a,b,c$ and $d=0$. Moreover, by minimality of $M$ we have that $\mu_1=-2\mu,$ $b=-a$ and $c=0$. Therefore,
\begin{equation}\label{Luc2}
\begin{matrix*}[l]
Jh(E_1,E_1)=-2 \mu E_1,& Jh(E_2,E_2)=\mu E_1+a E_2,\\
Jh(E_1,E_2)=\mu E_2,  & Jh(E_2,E_3)=-a E_3,\\
Jh(E_1,E_3)=\mu E_3,  & Jh(E_3,E_3)=\mu E_1-a E_2.
\end{matrix*}
\end{equation}
Either by straightforward computations or by using Lemma 3.1. in \cite{deszcz}, we see that the Ricci tensor of $M$ has one eigenvalue of multiplicity $2$ and so $M$ is, indeed, quasi-Einstein.\\
If $M$ is not totally geodesic, we may distinguish two cases, as $\mu=0$ or $\mu\neq 0$.\\
\textbf{Case 1.} $\mu=0$.
We obtain that 
\begin{equation}\label{star}
\begin{matrix*}[l]
Jh(E_1,E_1)=a E_1,& Jh(E_2,E_2)=-a E_1,\\
Jh(E_1,E_2)=-a E_2,  & Jh(E_2,E_3)=0,\\
Jh(E_1,E_3)=0,  & Jh(E_3,E_3)=0,
\end{matrix*}
\end{equation}
after having renamed the vectors in the basis $E_1,E_2,E_3\leadsto E_3,E_1,E_2$. Notice that $a \neq 0$,  $E_3$ is tangent to $I$ and  $E_2,E_3$ are tangent to $N$. At this point, we should refer the reader to \cite{CDVV}.
Given \eqref{star}, we see by Lemma 6.1 in \cite{CDVV} that $M$ satisfies equality in Chen's inequality ($\delta_{M}=2 $). Moreover, it is shown in \cite{pinching} that for a submanifold M of dimension $n$, with mean curvature vector $H$, the following distribution 
$$
\mathcal{D}=\{X\in T_pM| (n-1)h(X,Y)=n \langle X,Y \rangle H, \forall\ Y\in T_pM\}
$$
is integrable when it has constant dimension. In the case of our submanifold, we have $\mathcal{D}=\{E_3\}$ and therefore $\mathcal{D}^{\perp}=\{E_1,E_2 \}$. We see that the dimension of $D$ is constant and, because of the warped product condition, $\mathcal{D}^{\perp}$ is an integrable distribution.\\

\noindent\textbf{Case 2.} $ \mu\neq 0$. 
We have that \eqref{Luc2} holds for $a\neq 0$.
Moreover, the two distributions $\mathcal{D}$ and $\mathcal{D}^{\perp}$ are defined as $\mathcal{D}=\{E_1\}$ (tangent to $I$) and $\mathcal{D}^{\perp}=\{E_2,E_3\}$ (tangent to $N$). As $M$ is a warped product we know that $\mathcal{D}^{\perp}$ is integrable. However, from the proof of Theorem 2 in \cite{deszcz} we may see that for a Lagrangian submanifold with the second fundamental form as given by \eqref{Luc2}, the Lie bracket of $E_2, E_3$ always has a non vanishing  component in the direction of $E_1$. Therefore this case doesn't hold.\\

\noindent To conclude, the following result describes the immersion of $M$ into $\mathbb{S}^6$.
The starting point of the construction is a totally real immersion of a $2$-dimensional surface in $\mathbb S^6$ with ellipse of curvature a circle. It is shown in Theorem 5.3 of \cite{boltonvranckenwoodward} that such a surface is contained in a totally geodesic $\mathbb S^5$ and by Theorem 7.2 of \cite{boltonvranckenwoodward} that it can be constructed from a minimal Lagrangian surface in $\mathbb CP^2(4)$ using an appropriate Hopf lift. 

\begin{theorem}[ \cite{CDVV} ]\label{teoremrezult}
	Let $\tilde f:N^2 \longrightarrow \mathbb{S}^6(1)$ be a minimal (non-totally geodesic) totally real immersion in $\mathbb{S}^6$ whose ellipse of curvature is a circle. Then $N^2$ is  linearly full in a totally geodesic $\mathbb S^5$. Let $\vec n$ be a unit vector perpendicular to this $\mathbb S^5$. Then 
	\begin{align*}
	x:\left( -\frac{\pi}{2}, \frac{\pi}{2} \right)\times N^2\longrightarrow \mathbb S^6(1)\\
	(t,p)\mapsto \sin(t) \vec n+\cos(t)\tilde f(p)
	\end{align*}
	is a totally real immersion which satisfies the equality in $\delta_M\leq 2$.
	Conversely, every totally real (non-totally geodesic) immersion of $M$ into $\mathbb S^6(1)$ satisfying
	\begin{enumerate} 
		\item $\delta_M=2$,
		\item the dimension of $\mathcal{D}$ is constant,
		\item $\mathcal{D}^\perp$ is an integrable distribution
	\end{enumerate}
	can be locally obtained in this way.
\end{theorem}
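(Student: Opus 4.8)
The plan is to treat the two directions separately, establishing the construction first and then the rigidity/converse.

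For the direct statement I would begin with the linear fullness claim. Since $\tilde f$ is minimal, its ellipse of curvature is centred at the origin of the normal space, and writing a unit tangent vector as $u_\theta=\cos\theta\,e_1+\sin\theta\,e_2$ for an orthonormal frame $\{e_1,e_2\}$, minimality $h(e_1,e_1)+h(e_2,e_2)=0$ gives $h(u_\theta,u_\theta)=\cos 2\theta\,h(e_1,e_1)+\sin 2\theta\,h(e_1,e_2)$. This ellipse is a circle precisely when $\langle h(e_1,e_1),h(e_1,e_2)\rangle=0$ and $|h(e_1,e_1)|=|h(e_1,e_2)|$, which is exactly the algebraic hypothesis under which Theorem 5.3 of \cite{boltonvranckenwoodward} places $N^2$ in a totally geodesic $\mathbb S^5$; I would simply invoke that result to obtain $\vec n$ and the splitting $\mathbb R^7=\mathbb R\,\vec n\oplus\mathbb R^6$.

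Next I would verify directly that $x(t,p)=\sin t\,\vec n+\cos t\,\tilde f(p)$ is a warped product totally real immersion. A short computation gives $\langle x,x\rangle=1$, $\langle x_t,x_t\rangle=1$, $\langle x_t,x_*\tilde U\rangle=0$ and $\langle x_*\tilde U,x_*\tilde V\rangle=\cos^2 t\,g_N(\tilde U,\tilde V)$, using $\vec n\perp\mathbb S^5\ni\tilde f$; this exhibits the induced metric as $I\times_{\cos t}N$. For the totally real property I would use $J_pW=p\times W$ together with the cyclic symmetry and antisymmetry of the $G_2$-invariant triple product $\langle a\times b,c\rangle$ and the totally real property of $\tilde f$ to check $\langle Jx_t,x_*\tilde U\rangle=0$ and $\langle Jx_*\tilde U,x_*\tilde V\rangle=0$; since $\dim M=3$, a totally real immersion of this dimension is automatically Lagrangian. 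Finally I would compute the second fundamental form of $x$ by expressing the ambient derivatives in $\mathbb R^7$ and subtracting the tangential and sphere-normal parts, show that in the frame $E_1\sim\partial_t$, $E_2,E_3\sim$ lifts of an orthonormal frame on $N$ it takes the shape \eqref{star}, and then apply Lemma 6.1 of \cite{CDVV} to conclude $\delta_M=2$.

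For the converse I would start from the analysis already carried out above: under $\delta_M=2$ the second fundamental form is forced into the form \eqref{star} in a suitable orthonormal frame $\{E_1,E_2,E_3\}$ with $a\neq 0$, where $E_3$ spans $\mathcal D$ and $E_1,E_2$ span $\mathcal D^\perp$. Hypotheses (2) and (3) guarantee that $\mathcal D$ and $\mathcal D^\perp$ integrate to a local warped product $I\times_f N$ with $E_3$ tangent to $I$. The crucial observation is that $h(E_3,E_3)=0$ and $\nabla_{E_3}E_3=0$, so viewing $x$ as an $\mathbb R^7$-valued map and using the Gauss formula for $\mathbb S^6\subset\mathbb R^7$ one obtains $x_{tt}=-x$ along the integral curves of $E_3$; hence $x(t,p)=\cos t\,A(p)+\sin t\,B(p)$ with $|A|=|B|=1$ and $A\perp B$. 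The decisive step, and the main obstacle, is to prove that $B$ is a constant vector $\vec n$: this requires differentiating $B=x_t|_{t=0}$ in the $E_1,E_2$ directions and showing the result vanishes, which is exactly where the Codazzi equation, the warped product connection coefficients of Proposition \ref{prop}, and the precise form \eqref{star} must be combined. Once $B\equiv\vec n$ is established, $A(p)=\tilde f(p)$ lies in $\vec n^{\perp}\cap\mathbb S^6=\mathbb S^5$, and a final check using the Gauss and Codazzi equations identifies $\tilde f$ as a minimal totally real surface with circular ellipse of curvature, closing the loop.
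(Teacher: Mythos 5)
First, a point of reference: the paper does not prove this theorem at all --- it is imported verbatim from \cite{CDVV}, and the only in-house argument is the short Proposition verifying that the induced metric of $x$ is a warped product metric, together with pointers to Theorems 5.3 and 7.2 of \cite{boltonvranckenwoodward}. Your outline follows the strategy of the cited source in broad strokes, but the direct part has a genuine gap at the verification that $x$ is totally real. Writing $x=\sin t\,\vec n+\cos t\,\tilde f$ one finds $Jx_t=x\times x_t=-\,\vec n\times\tilde f$, so $\langle Jx_t,x_u\rangle=-\cos t\,\langle \vec n\times\tilde f,\tilde f_u\rangle$, while $\langle Jx_u,x_v\rangle$ contains the term $\sin t\cos^2 t\,\langle \vec n\times\tilde f_u,\tilde f_v\rangle$. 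Neither of these is killed by the ingredients you list: total antisymmetry of $\langle a\times b,c\rangle$ plus the totally real property of $\tilde f$ only give $\langle\tilde f\times\tilde f_u,\tilde f_v\rangle=0$, a relation not involving $\vec n$, and the hyperplane $\vec n^{\perp}$ is \emph{not} closed under $\times$, so no formal cancellation occurs. The needed identities $\langle \vec n\times\tilde f,\tilde f_u\rangle=0=\langle \vec n\times\tilde f_u,\tilde f_v\rangle$ say precisely that $\tilde f$ is horizontal (Legendrian) for the Sasakian structure on $\mathbb S^5=\mathbb S^6\cap\vec n^{\perp}$ with Reeb field $p\mapsto \vec n\times p$; this is the content of the Hopf-lift description in Theorem 7.2 of \cite{boltonvranckenwoodward} and is exactly where the circular-ellipse hypothesis enters. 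You invoke Theorem 5.3 for linear fullness but not 7.2, so as written your argument cannot close.

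In the converse, two steps are asserted rather than proved. Hypotheses (2) and (3) alone give only a local product foliation; to obtain a warped product you must show that $\mathcal D$ is totally geodesic and the leaves of $\mathcal D^{\perp}$ are extrinsic spheres, which has to be extracted from \eqref{star} via Codazzi (a Hiepko-type argument), as in \cite{CDVV}. More importantly, the step you yourself call ``the main obstacle'' --- constancy of $B$ --- is the mathematical core of the converse and is left as a description rather than a proof. It is in fact closable along your lines: since \eqref{star} gives $h(\cdot,E_3)=0$, the shape operator $A_{JE_3}$ vanishes, so the Gauss equation yields $K(E_i\wedge E_3)=1$, hence $f''+f=0$ and, after normalisation, $f=\cos t$; then for $U$ tangent to the fiber, the warped-product identity $\nabla_U\partial_t=(f'/f)U$ combined with $h(U,E_3)=0$ and $\langle U,E_3\rangle=0$ gives $D_Ux_t=-\tan t\,x_*U$ in $\mathbb R^7$, and substituting $x=\cos t\,A+\sin t\,B$ forces $B_*U=0$, i.e.\ $B\equiv\vec n$. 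Without this computation (and the final identification of $A=\tilde f$ as minimal, totally real, with circular ellipse of curvature, which you also only announce), the converse remains a plan rather than a proof.
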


\begin{proposition}
	The induced metric by the immersion $x$ on $( -\frac{\pi}{2}, \frac{\pi}{2} )\times N^2$ is a warped product metric.
\end{proposition}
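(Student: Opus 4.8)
The plan is to compute the induced metric directly from the explicit formula $x(t,p)=\sin(t)\,\vec n+\cos(t)\,\tilde f(p)$ and to read off a warped product structure. Writing $E_1=\frac{\partial}{\partial t}$ and letting $\tilde U,\tilde V$ be vector fields tangent to $N^2$, the target is to show that the induced metric, whose components are $\langle x_\star(\cdot),x_\star(\cdot)\rangle$, equals
\begin{equation*}
dt^2+\cos^2(t)\,g,
\end{equation*}
where $g$ denotes the metric induced on $N^2$ by $\tilde f$. Comparing with \eqref{metric}, this is precisely a warped product metric on $\left(-\frac{\pi}{2},\frac{\pi}{2}\right)\times N^2$ with base the interval, warping function $f(t)=\cos(t)$ and fibre $(N^2,g)$.

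First I would differentiate $x$. In the base direction one gets $x_\star(E_1)=\cos(t)\,\vec n-\sin(t)\,\tilde f(p)$, while the chain rule gives $x_\star(\tilde U)=\cos(t)\,\tilde f_\star(\tilde U)$ for $\tilde U$ tangent to $N^2$. Next I would record the orthogonality relations furnished by the geometric setup: since $\tilde f$ is linearly full in a totally geodesic $\mathbb S^5$ and $\vec n$ is a unit vector perpendicular to the span of that $\mathbb S^5$, both the position vector $\tilde f(p)$ and every tangent vector $\tilde f_\star(\tilde U)$ lie in that span, whence $\langle\vec n,\tilde f(p)\rangle=0$ and $\langle\vec n,\tilde f_\star(\tilde U)\rangle=0$. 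Differentiating $\langle\tilde f,\tilde f\rangle=1$ yields $\langle\tilde f(p),\tilde f_\star(\tilde U)\rangle=0$, and of course $\langle\vec n,\vec n\rangle=1$. Substituting these into the three types of metric coefficients gives $\langle x_\star(E_1),x_\star(E_1)\rangle=\cos^2(t)+\sin^2(t)=1$, the mixed term $\langle x_\star(E_1),x_\star(\tilde U)\rangle=\cos^2(t)\langle\vec n,\tilde f_\star(\tilde U)\rangle-\sin(t)\cos(t)\langle\tilde f(p),\tilde f_\star(\tilde U)\rangle=0$, and $\langle x_\star(\tilde U),x_\star(\tilde V)\rangle=\cos^2(t)\,g(\tilde U,\tilde V)$. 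This is exactly the metric displayed above.

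The computation is elementary and there is no real analytic obstacle; the only step that genuinely uses the hypotheses is the orthogonality of $\vec n$ against both $\tilde f(p)$ and $\tilde f_\star(\tilde U)$. This is where the input from Theorem \ref{teoremrezult} — and, underlying it, the statement that an immersion with circular ellipse of curvature is contained in a totally geodesic $\mathbb S^5$ (Theorem 5.3 of \cite{boltonvranckenwoodward}) — is used, guaranteeing that $\vec n$ is normal to the entire linear subspace spanned by $N^2$, so that it meets neither the position vectors of $N^2$ nor its tangent directions. Once this is in hand, the warped product form $dt^2+\cos^2(t)\,g$ is immediate.
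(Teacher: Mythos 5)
Your proposal is correct and follows essentially the same route as the paper: differentiate the explicit formula for $x$, use that $\vec n$ is a constant unit vector orthogonal to the hyperplane containing the totally geodesic $\mathbb S^5$ (hence orthogonal to both $\tilde f(p)$ and $\tilde f_\star(\tilde U)$), and read off the metric $dt^2+\cos^2(t)\,g$. Your write-up is in fact slightly more complete than the paper's, since you spell out the orthogonality relations and the coefficient $\langle x_\star(E_1),x_\star(E_1)\rangle=1$, which the paper leaves implicit.
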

\begin{proof}
	Let $t,u,v$ be local coordinates on $( -\frac{\pi}{2}, \frac{\pi}{2} )\times N^2$. As the normal to the totally geodesic $\mathbb{S}^5$ in $\mathbb{S}^6$ is a constant vector, we obtain that 
	$$
	x_t=\cos(t)\vec{n}-\sin(t)\tilde f,\quad x_u=\cos(t)\tilde f_u,\quad x_v=\cos(t)\tilde f_v.
	$$
	We immediately obtain that the induced metric is given by 
	$$
	\langle x_u,x_u \rangle=\cos^2(t) g(\tilde f_u,\tilde f_u), \quad \langle x_v,x_v \rangle=\cos^2(t) g(\tilde f_v,\tilde f_v),
	$$  
	$$
	\langle x_u,x_v \rangle=\cos^2(t) g(\tilde f_u,\tilde f_v), \quad \langle x_t,x_u \rangle=\langle x_t,x_v \rangle=0.
	$$
	This concludes the proof of the main theorem.
\end{proof}



\begin{thebibliography}{999}
	\bibitem{tsinghuapaper} M. Anti\'c, H. Li, L. Vrancken, X. Wang, Affine hypersurfaces with constant sectional curvature, preprint.
	
	\bibitem{onS6_6} M. Anti\'c, L. Vrancken,  CR-submanifolds of the nearly K\"ahler $6$-sphere, Geometry of Cauchy-Riemann submanifolds, 57--90, Springer, Singapore, 2016.
	
	\bibitem{beem} J. K. Beem, P. E. Ehrlich, Th. G. Powell, Warped product manifolds in relativity, Selected Studies: Physics- Astrophysics, Mathematics, History of Science, North-Holland, New-York, 1982.
	
	\bibitem{boltonvranckenwoodward}  J. Bolton, L. Vrancken, Lyndon M. Woodward, On almost complex curves in the nearly K\"ahler $6$-sphere, Quart. J. Math. Oxford Ser. (2) 45 (1994), no. 180, 407--427. 
	
	\bibitem{Bryant} Robert L. Bryant, Second order families of special Lagrangian 3-folds. Perspectives in Riemannian geometry,  63--98, CRM Proc. Lecture Notes, 40, Amer. Math. Soc., Providence, RI,  2006.
	
	\bibitem{butruille} J.-B. Butruille, Homogeneous nearly K\"ahler manifolds, Handbook of pseudo-Riemannian geometry and supersymmetry, 399--423, IRMA Lect. Math. Theor. Phys., 16, Eur. Math. Soc., Z\"urich, 2010.
	
	\bibitem{Nastasia} N. Cipriani, Spacelike submanifolds, their umbilical properties and applications to gravitational physics, PhD thesis, KU Leuven, Arenberg Doctoral School, Belgium 2017.
	
	\bibitem{pinching} B.-Y. Chen, Some pinching and classification theorems for minimal submanifolds, Arch. Math. 60 (1993), 568--578.
	
	\bibitem{chen} B.-Y. Chen, Warped Product Manifolds and Submanifolds, World Scientific, 2017.
	
	\bibitem{CDVV} B.-Y. Chen, F. Dillen, L. Verstraelen, L. Vrancken, Characterizing a class of totally real submanifolds of $\mathbb{S}^6$ by their sectional curvatures, T\^ohoku Math. J., 47 (1995), 185--198.
	
	\bibitem{chen36} B.-Y. Chen, F. Dillen, L. Verstraelen, L. Vrancken, Two equivariant totally real immersions into the nearly K\"ahler $6$-sphere and their characterization, Japan. J. Math. 21 (1995), 207--222.
	
	
	\bibitem{deszcz}R. Deszcz, F. Dillen, L. Verstraelen, L. Vrancken, Quasi-Einstein Real submanifolds of the Nearly K\"ahler $6$-sphere, T\^ohoku Math. J.  51 (1999), 461--478.
	
	\bibitem{opozda} F. Dillen, B. Opozda, L. Verstraelen, L. Vrancken, On totally real $3$-dimensional submanifolds of the nearly Kaehler $6$-sphere, Proc. Amer. Math. Soc. 99 (1987), 741--749.
	
	
	\bibitem{DVV} F. Dillen, L. Verstraelen, L. Vrancken, On almost complex surfaces of the Nearly K\"ahler $6$-sphere II, Kodai Math. J. 10 (1998), 261--271.
	
	\bibitem{onS6_1} F. Dillen, L. Vrancken, Totally real submanifolds in $\mathbb{S}^6(1)$ satisfying Chen's equality, Trans. Amer. Math. Soc.
	348 (1996), no. 4, 1633--1646.
	
	\bibitem{BartDioosPHDthesis} B. Dioos, Submanifolds of the nearly
	K\"ahler manifold $\mathbb S^3 \times \mathbb S^3$, PhD thesis, KU Leuven, Arenberg Doctoral School, Belgium 2015.
	
	\bibitem{dioosvranckenwang} B. Dioos, L. Vrancken, X. Wang, Lagrangian submanifolds in the homogeneous nearly K\"ahler $\mathbb{S}^3\times \mathbb{S}^3$, Ann. Global Anal. Geom. 53 (2018), no. 1, 39--66.
	
	\bibitem{onS6_2} N. Ejiri, Equivariant minimal immersions of $S^2$ into $S^{2m}(1)$, Trans. Amer. Math. Soc. 297 (1986), no. 1, 105--124.
	
	\bibitem{ejiri} N. Ejiri, Totally real submanifolds in a $6$-sphere, Proc. Amer. Math. Soc. 83 (1981), no. 4, 759--763.
	
	\bibitem{pentrus6} A. Fr\"olicher, Zur Differentialgeometrie der komplexen Strukturen, Math. Ann. 129 (1955), 50--95.
	
	\bibitem{fukami} T. Fukami, S. Ishihara, Almost Hermitian structure on $\mathbb{S}^6$, T\^ohoku Math. J. 7 (1995), 151--156. 
	
	\bibitem{onS6_5} J. D. Lotay, Ruled Lagrangian submanifolds of the $6$-sphere, Trans. Amer. Math. Soc. 363 (2011), no. 5, 2305--2339.
	
	\bibitem{mashimo} K. Mashimo, Homogeneous totally real submanifolds of $\mathbb{S}^6(1)$, Tsukuba J. Math. 9 (1985), 185--202.
	
	\bibitem{nagy} P.-A. Nagy, Nearly K\"ahler geometry and Riemannian foliations, Asian J. Math. 6 (2002), no. 3, 481--504.
	
	\bibitem{oneil2} B. O'Neill, Semi-Riemannian Geometry with application in Relativity, Academic Press, New York, 1983.
	
	\bibitem{67Nastasia} R. Penrose, On the instability of extra space time dimensions. In \emph{The future of the theoretical physics and cosmology, Cambridge (2002)},  Cambridge Univ. Press, Cambridge (2003), 185--201.
	
	\bibitem{podesta} F. Podest\`a, A. Spiro, $6$-dimensional nearly K\"ahler manifolds of cohomogeneity one, J. Geom. Phys. 60 (2010), no. 2, 156--164.
	
	\bibitem{onS6_3} L. Vrancken, Killing vector fields and Lagrangian submanifolds of the nearly Kaehler $\mathbb{S}^6$, J. Math. Pures Appl.
	(9) 77 (1998), no. 7, 631--645.
	
	\bibitem{Luc} L. Vrancken, Locally symmetric submanifolds of the nearly K\"ahler $\mathbb{S}^6$, Algebras Groups Geom. 5 (1988), 369--394.
	
	\bibitem{onS6_4} L. Vrancken, Special Lagrangian submanifolds of the nearly Kaehler $6$-sphere, Glasg. Math. J. 45 (2003), no. 3, 415--426.
	
	
	
\end{thebibliography}
\end{document}